\theoremstyle{definition}
\newtheorem{thm}{Theorem}[section]
\newtheorem{lem}[thm]{Lemma}
\newtheorem{definition}[thm]{Definition}
\begin{document}

\title[On the border rank of Products of CW tensor]{Some concerns on the border rank of Kronecker products of the Coppersmith-Winograd tensor}

\begin{abstract}
This note provides a detailed proof of Conner--Gesmundo--Landsberg--Ventura's result that the border rank of the Kronecker square of the little Coppersmith--Winograd tensor is $(q+2)^{2}$.
We also indicate how the same ideas seem to extend to the case of the Kronecker cube, pointing toward the conjectural value 
$(q+2)^{m}$ ($m\ge 4$), although a full proof is left for future work.
\end{abstract}

\author{Daiki Kawabe}
\address{%
EAGLYS Inc.\\
5-27-3 Sendagaya, Shibuya-ku, Tokyo 151-0051, Japan}
\email{d.kawabe@eaglys.co.jp (job), daiki.kawabe.math@gmail.com (math)}

\maketitle

\section{Introduction}
Multiplying two $n\times n$ matrices can be realized as the contraction of the
\emph{matrix–multiplication tensor} $M_{\langle n\rangle}\in
  (\mathbb{C}^{n\times n})^{*}\otimes
  (\mathbb{C}^{n\times n})^{*}\otimes
  \mathbb{C}^{n\times n}$,
and, after identifying each space with its dual via the Hilbert–Schmidt
inner product, we may regard $M_{\langle n\rangle}$ as lying in
$(\mathbb{C}^{n\times n})^{\otimes3}$.
The \emph{exponent of matrix multiplication} is
\[
  \omega:=\inf\bigl\{\tau\in\mathbb{R}\ \bigm|\ 
          n\times n \text{ matrices can be multiplied in }
          O(n^{\tau})\text{ arithmetic operations}\bigr\}.
\]
Schönhage's asymptotic sum inequality gives
\[
  \underline{\mathbf R}(M_{\langle n\rangle})=O(n^{\tau})\;\Longrightarrow\;
  \omega\le\tau.
\]
Here the \emph{border rank} $\underline{\mathbf R}(T)$ of a tensor $T$ is
the least $r$ for which $T$ lies in the Zariski closure of rank-$r$
tensors; it gauges how well $T$ can be approximated by low-rank tensors,
the resource exploited by rank-based algorithms.
Conversely, any non-trivial lower bound on
$\underline{\mathbf R}(M_{\langle n\rangle})$—or on a tensor $T$ whose
Kronecker powers degenerate to $M_{\langle n\rangle}$—sets an
intrinsic barrier for all rank-based methods, and therefore limits
the smallest $\omega$ they can ever achieve.

Coppersmith and Winograd~\cite{CW90} introduced the \emph{little
Coppersmith–Winograd tensor}
\[
  T_{cw,q}\;=\;
  \sum_{j=1}^{q}\bigl(
    a_{0}\!\otimes b_{j}\!\otimes c_{j}\;+\;
    a_{j}\!\otimes b_{0}\!\otimes c_{j}\;+\;
    a_{j}\!\otimes b_{j}\!\otimes c_{0}
  \bigr)
  \;\;\in\;(k^{\,q+1})^{\otimes3},
  \qquad q\ge1
\]
whose Kronecker powers $T_{cw,q}^{\boxtimes m}\;(m\ge1)$ underlie every known
algorithm with $\omega<2.3728639$.

Since $\mathrm{rank}\bigl(T_{cw,q}\bigr)=q+2$, sub-multiplicativity gives
\[
  \underline{\mathbf R}\bigl(T_{cw,q}^{\boxtimes m}\bigr)\;\le\;(q+2)^{m}.
\]
Conner, Gesmundo, Landsberg and Ventura~\cite{CGLV22} proved that this bound
is tight for $(m,q)\;=\;(2,\;q>2)$ and $(3,\;q>4)$:
\begin{equation}\label{eq:CGLV}
  \underline{\mathbf R}\bigl(T_{cw,q}^{\boxtimes m}\bigr) = (q+2)^{m}. 
\end{equation}
Their argument uses a geometric analysis of representation-theoretic slices
of the first ($m=2$) and second ($m=3$) Koszul flattenings.
Recall that for
$T\in A\otimes B\otimes C$ and $0\le p<\dim A$, the \emph{$p$-th Koszul
flattening}
\[
  T^{\wedge p}_{A}:\;
  \wedge^{p}A\otimes B^{*}\;\longrightarrow\;
  \wedge^{p+1}A\otimes C
\]
satisfies Landsberg–Ottaviani's inequality~\cite{LO15} when
$\dim A=2p+1$:
\[
  \underline{\mathbf R}(T)\;\ge\;
  \frac{\operatorname{rank}(T^{\wedge p}_{A})}
       {\binom{\dim A-1}{p}}.
\]

The present note gives an elementary linear–algebraic proof of
\eqref{eq:CGLV}.
We fix $p=1$ and let $A':=\langle e_{0},e_{1},e_{2}\rangle$ be a 3-dimensional vector space.
Then we introduce the map $\phi_{m}:A^{\otimes m}\longrightarrow A'$
\[
  \phi_{m}(a_{i_{1}}\otimes\cdots\otimes a_{i_{m}})=
  \begin{cases}
    e_{0}, & \text{all indices $i_{r}=0$},\\
    e_{1}, & \text{exactly one $i_{r}=1$, others $0$},\\
    e_{2}, & \text{exactly one $i_{r}=2$, others $0$},\\
    0,     & \text{otherwise}.
  \end{cases}
\]
This choice of $\phi_{m}$—different from that in earlier work—turns out to
be crucial for the simpler proof presented below.

This note is organized as follows. Section~\ref{sec:lemmas} collects auxiliary linear--algebraic results; Section~\ref{sec:koszul} recalls the Koszul flattening; Section~\ref{sec:CW} introduces the Coppersmith--Winograd tensor; Section~\ref{sec:square} and \ref{sec:qube} contain the rank computation by induction on~$q$.

\section{Lemmas}\label{sec:lemmas}
We gather several foundational results that will be invoked in the main proof.

\subsection{Border rank}
This subsection recalls the definitions of tensor rank and border rank and proves a monotonicity property under linear maps.

Let $A,B,C$ be finite--dimensional vector spaces over $\mathbb{C}$.
For $T\in A\otimes B\otimes C$ we write
\[
  \mathrm{R}(T)=\min\bigl\{r\mid T=\sum_{\ell=1}^{r}x_\ell\otimes y_\ell\otimes z_\ell\bigr\}
\]
for the \emph{tensor rank}.  Its \emph{border rank} is
\[
  \underline{\mathrm{R}}(T)=\min\bigl\{r\mid T=\lim_{m\to\infty}T_m,\;\mathrm{R}(T_m)\le r\bigr\},
\]
where the limit is taken either in the Euclidean or the Zariski topology (these closures coincide in characteristic~$0$).

Denote by $\Sigma_r\subset A\otimes B\otimes C$ the set of tensors of rank at most $r$. We will use

\begin{thm}[Monotonicity]\label{thm:monotone}
Let $\phi\colon A\to A'$ be a linear map and set $L:=\phi \otimes \mathrm{id}_B \otimes \mathrm{id}_C$.
For every $T\in A\otimes B\otimes C$ one has
\[
  \underline{\mathrm{R}}(T)\;\ge\;\underline{\mathrm{R}}\bigl(L(T)\bigr).
\]
\end{thm}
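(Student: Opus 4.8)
The plan is to prove the statement by chasing the definition of border rank through the linear map $L$. The key observation is that $L=\phi\otimes\mathrm{id}_B\otimes\mathrm{id}_C$ is itself a linear map between the ambient tensor spaces $A\otimes B\otimes C$ and $A'\otimes B\otimes C$, and linear maps are continuous, so they preserve limits. The one extra fact needed is that $L$ does not increase tensor rank: if $T=\sum_{\ell=1}^r x_\ell\otimes y_\ell\otimes z_\ell$ then $L(T)=\sum_{\ell=1}^r \phi(x_\ell)\otimes y_\ell\otimes z_\ell$, which exhibits $L(T)$ as a sum of $r$ rank-one tensors (some summands may vanish if $\phi(x_\ell)=0$, which only helps). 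Hence $L(\Sigma_r)\subseteq\Sigma_r$.

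First I would record the rank inequality $\mathrm{R}(L(T))\le\mathrm{R}(T)$ from the computation above. Next, suppose $\underline{\mathrm{R}}(T)=r$, so there is a sequence $T_m\to T$ with $\mathrm{R}(T_m)\le r$ for all $m$. Apply $L$: by continuity of the linear map $L$ we get $L(T_m)\to L(T)$, and by the rank bound $\mathrm{R}(L(T_m))\le\mathrm{R}(T_m)\le r$. This realizes $L(T)$ as a limit of tensors of rank at most $r$, so $\underline{\mathrm{R}}(L(T))\le r=\underline{\mathrm{R}}(T)$, which is exactly the claim. Equivalently, in the Zariski-topology formulation one says $L(\overline{\Sigma_r})\subseteq\overline{L(\Sigma_r)}\subseteq\overline{\Sigma_r}$, using that the image of a closure under a morphism lies in the closure of the image; since the Euclidean and Zariski closures of $\Sigma_r$ agree in characteristic zero, either route works.

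There is no real obstacle here; the statement is essentially a formal consequence of two facts—linear maps are continuous (or are morphisms of varieties) and they are rank-nonincreasing on tensors. The only point deserving a sentence of care is the handling of possibly-vanishing summands $\phi(x_\ell)\otimes y_\ell\otimes z_\ell$: these do not break the bound because a sum of at most $r$ terms, some of which are zero, still has rank at most $r$. I would also note for the record that the same argument applies with $\phi$ acting on the $B$ or $C$ factor instead, so the monotonicity holds under linear maps on any single tensor factor; this is the form in which it will be used later (with $\phi_m$ applied to $A^{\otimes m}$).
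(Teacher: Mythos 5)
Your argument is correct and essentially the same as the paper's: both rest on the observation that $L(\Sigma_r)\subseteq\Sigma_r$ together with continuity of the linear map $L$. The paper phrases step two via the Zariski-closure inclusion $L(\overline{\Sigma_r})\subseteq\overline{\Sigma_r}$, which you also mention, while you lead with the equivalent sequential formulation; the content is identical.
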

\begin{proof}
Suppose $T=\sum_{\ell=1}^{r}x_\ell\otimes y_\ell\otimes z_\ell$ has rank~$r$.  Then
$L(T)=\sum_{\ell=1}^{r}\phi(x_\ell)\otimes y_\ell\otimes z_\ell$ is a sum of at most $r$ simple tensors, so $\mathbf R\bigl(L(T)\bigr)\le r$; hence $L(\Sigma_r)\subseteq\Sigma_r$.
Because $L$ is linear (and therefore regular),
$L(\overline{\Sigma_r})\subseteq\overline{\Sigma_r}$.
If $\underline{\mathrm{R}}(T)=r$, then $T\in\overline{\Sigma_r}$, so $L(T)\in\overline{\Sigma_r}$ and $\underline{\mathrm{R}}\bigl(L(T)\bigr)\le r$.
\end{proof}
\subsection{Kronecker products}
Let
\begin{align*}
   T &;=\; \sum_{i_1,\dots,i_k} T^{i_1\dots i_k}\,
           v_{i_1}^{(1)}\otimes\cdots\otimes v_{i_k}^{(k)}
   \;\in\;
   V_1\otimes\cdots\otimes V_k , \\
   S &;=\; \sum_{j_1,\dots,j_k} S^{j_1\dots j_k}\,
           w_{j_1}^{(1)}\otimes\cdots\otimes w_{j_k}^{(k)}
   \;\in\;
   W_1\otimes\cdots\otimes W_k ,
\end{align*}
where the $V_r$ and $W_r$ are vector spaces over the common base field.

\begin{definition}[Kronecker product]
The \emph{Kronecker product} (also called the \emph{external tensor product})
of $T$ and $S$ is the tensor
\begin{align*}
   T\boxtimes S
   &:=\;
   \sum_{i_1,\dots,i_k}\;
   \sum_{j_1,\dots,j_k}
   T^{\,i_1\dots i_k}\,S^{\,j_1\dots j_k}\;
   \bigl(v_{i_1}^{(1)}\otimes w_{j_1}^{(1)}\bigr)
        \otimes\cdots\otimes
   \bigl(v_{i_k}^{(k)}\otimes w_{j_k}^{(k)}\bigr) \\
   &\in\;
   (V_1\otimes W_1)\otimes\cdots\otimes(V_k\otimes W_k).
\end{align*}
Each original factor space is kept distinct: a copy of $S$ is tensored
on \emph{outside} every copy of $T$ rather than contracted with it.
\end{definition}
By definition, $\underline{\mathrm{R}}(T \boxtimes S) \leq \underline{\mathrm{R}}(T) \underline{\mathrm{R}}(S)$.

\begin{definition}[Kronecker power]
For a tensor $T\in V_1\otimes\cdots\otimes V_k$ and an integer $m\ge 1$,
the \emph{$m$–fold Kronecker product} (or \emph{Kronecker power}) is
defined recursively by
\[
   T^{\boxtimes 1}:=T,
   \qquad
   T^{\boxtimes m}:=T^{\boxtimes(m-1)}\boxtimes T.
\]
In expanded form,
\[
   T^{\boxtimes m}
   =\!\!\!\!
   \sum_{i^{(1)}_1,\dots,i^{(1)}_k}
   \cdots
   \sum_{i^{(m)}_1,\dots,i^{(m)}_k}
   \Bigl(\prod_{r=1}^{m} T^{\,i^{(r)}_1\dots i^{(r)}_k}\Bigr)\;
   \bigl(v_{i^{(1)}_1}^{(1)}\otimes\cdots\otimes v_{i^{(m)}_1}^{(1)}\bigr)
   \otimes\cdots\otimes
   \bigl(v_{i^{(1)}_k}^{(k)}\otimes\cdots\otimes v_{i^{(m)}_k}^{(k)}\bigr),
\]
which lives in
$(V_1^{\otimes m})\otimes\cdots\otimes(V_k^{\otimes m})$.
\end{definition}

\subsection{Koszul}\label{sec:koszul}
We summarise the Koszul flattening construction and fix the notation that will be used throughout the paper.

Fix bases $\{ a_{i} \}, \{ b_{j} \}, \{ c_{k} \}$ of the vector spaces $A, B, C$ over $\mathbb{C}$, respectively, and let $p\ge1$.
Given a tensor $T=\sum_{ijk}T^{ijk}a_{i} \otimes b_{j} \otimes c_{k}\in A \otimes B \otimes C$, the $p$-th Koszul flattening on $A$ is the linear map
\begin{align*}
T^{\wedge p}_{A}: \wedge^{p}A \otimes B^{*} &\longrightarrow \wedge^{p+1}A \otimes C\\
X \otimes \beta &\longmapsto \sum_{ijk}T^{ijk}\,\beta(b_{j})(a_{i} \wedge X) \otimes c_{k}.
\end{align*}
Following Landsberg--Ottaviani \cite{LO15} when $\dim A = 2p+1$,
\[
  \underline{\mathbf{R}}(T)\ge\frac{\mathrm{rank}(T^{\wedge p}_{A})}{\binom{\dim A-1}{p}}.
\]
For $m\ge1$ we set $a_{i_{1}\ldots i_{m}}:=a_{i_{1}}\otimes\cdots\otimes a_{i_{m}} \in A^{\otimes m}$ (and similarly for $B$ and $C$).
Let $A':=\langle e_{0},e_{1},e_{2}\rangle$ be a 3-dimensional vector space. 
We define
\[
T^{\wedge p}_{A'}:A'\otimes B^{*\otimes m}\longrightarrow\wedge^{2}A'\otimes C^{\otimes m},\qquad
(X\otimes\beta)\longmapsto\sum_{I,J,K}T^{IJK}\,\beta(b_{J})\bigl(\phi_{m}(a_{I})\wedge X\bigr)\otimes c_{K},
\]
where $\phi_m:A^{\otimes m}\to A'$ is given by
\[
  \phi_{n}(a_{i_{1} \ldots i_{m}})=\begin{cases}
     e_{0}, & \text{all }i_{r}=0,\\
     e_{1}, & \text{exactly one $i_{r}=1$ and the others are 0},\\
     e_{2}, & \text{exactly one $i_{r}=2$ and the others are 0},\\
     0,     & \text{otherwise}.
  \end{cases}
\]

\subsection{Coppersmith--Winograd tensor}\label{sec:CW}
We introduce the Coppersmith--Winograd tensor together 
with the notation needed for its Kronecker powers and difference tensors.
Fix a positive integer $q$.
\begin{itemize}
\item The (little) \emph{Coppersmith--Winograd tensor}
\[
T_{cw,q}=\sum_{j=1}^{q}\bigl(a_{0}\otimes b_{j}\otimes c_{j}+a_{j}\otimes b_{0}\otimes c_{j}+a_{j}\otimes b_{j}\otimes c_{0}\bigr) \in (\mathbb{C}^{q+1})^{\otimes 3}.
\]
\item Elementary $q$--covariant tensor $W_{j}:=a_{0}\otimes b_{j}\otimes c_{j}+a_{j}\otimes b_{0}\otimes c_{j}+a_{j}\otimes b_{j}\otimes c_{0}$ \;$(1\le j\le q) \in (\mathbb{C}^{q+1})^{\otimes 3}$.
\item Cayley--Whitney tensor $T_{cw,q}=\sum_{j=1}^{q}W_{j} \in (\mathbb{C}^{q+1})^{\otimes 3}$.
\item $m$--fold difference tensor $S^{(m)}_{q}:=T_{cw,q}^{\boxtimes m}-T_{cw,q-1}^{\boxtimes m} \in (\mathbb{C}^{q+1})^{\otimes 3}$.
\end{itemize}

\subsection{Rank of subspaces}
We establish a linear--algebraic criterion that relates the rank of a sum of maps to the ranks of its components.

\begin{lem}\label{rank on direct sum}
Let $f,g:V\to W$ be linear maps between finite--dimensional vector spaces.
Let $X\subset\mathrm{Im}(g)$ satisfy $X\cap\mathrm{Im}(f)=\{0\}$.
Assume there is a subspace $U\subset V$ such that $g(U)=X$ and $V=U\oplus\mathrm{Ker}(f)$.
Then $\mathrm{rank}(f+g)=\mathrm{rank}(f)+\dim X$.
\end{lem}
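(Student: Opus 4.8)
The plan is to pin down the image of $f+g$ exactly, by showing
\[
  \mathrm{Im}(f+g)\;=\;\mathrm{Im}(f)\oplus X .
\]
Since $X\cap\mathrm{Im}(f)=\{0\}$ this sum is direct, so the conclusion follows at once by taking dimensions: $\mathrm{rank}(f+g)=\dim\mathrm{Im}(f)+\dim X=\mathrm{rank}(f)+\dim X$. Here the splitting $V=U\oplus\mathrm{Ker}(f)$ serves to separate the $f$--contribution from the $g$--contribution, while $g(U)=X$ (together with $X\subset\mathrm{Im}(g)$) keeps the $g$--contribution inside $X$.

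I would first dispatch the bookkeeping. As $f$ kills $\mathrm{Ker}(f)$ and $V=U\oplus\mathrm{Ker}(f)$, we have $\mathrm{Im}(f)=f(U)$ with $f|_U$ injective, so $\mathrm{rank}(f)=\dim U$; moreover $(f+g)|_U$ is injective and $(f+g)(U)\cap X=\{0\}$, since $(f+g)(u)\in X$ forces $f(u)\in X$ (recall $g(u)\in X$), hence $f(u)\in X\cap\mathrm{Im}(f)=\{0\}$ and therefore $u\in U\cap\mathrm{Ker}(f)=\{0\}$. For the inclusion $\mathrm{Im}(f+g)\subseteq\mathrm{Im}(f)+X$, write $v=u+w$ with $u\in U$, $w\in\mathrm{Ker}(f)$, so that $(f+g)(v)=f(u)+g(u)+g(w)$ with $f(u)\in\mathrm{Im}(f)$ and $g(u)\in g(U)=X$; everything then hinges on placing $g(w)$, for $w\in\mathrm{Ker}(f)$, inside $\mathrm{Im}(f)+X$. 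For the reverse inclusion it suffices to prove $X\subseteq\mathrm{Im}(f+g)$: granting this, each $u\in U$ gives $f(u)=(f+g)(u)-g(u)\in\mathrm{Im}(f+g)$ (since $g(u)\in X\subseteq\mathrm{Im}(f+g)$), whence $\mathrm{Im}(f)=f(U)\subseteq\mathrm{Im}(f+g)$ as well.

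The crux, which I expect to be the main obstacle, is therefore the behaviour of $g$ along $\mathrm{Ker}(f)$: one must show that $g(\mathrm{Ker}(f))\subseteq\mathrm{Im}(f)+X$ (this completes the first inclusion) and that every $\xi\in X$ can be written $\xi=g(w)$ for some $w\in\mathrm{Ker}(f)$, so that $(f+g)(w)=g(w)=\xi$ exhibits $\xi$ in the image (this completes the reverse inclusion). This is exactly the place where the configuration of the four subspaces $\mathrm{Im}(f)$, $\mathrm{Im}(g)$, $U$ and $\mathrm{Ker}(f)$ encoded in the hypotheses has to be used in full, rather than just their dimensions. Once $\mathrm{Im}(f+g)=\mathrm{Im}(f)\oplus X$ has been secured, taking dimensions completes the proof.
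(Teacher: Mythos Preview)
Your outline correctly isolates the decisive point---the behaviour of $g$ on $\mathrm{Ker}(f)$---but you leave exactly that step unproven, and in fact it \emph{cannot} be proven from the stated hypotheses: the lemma is false as written. Take $V=W=\mathbb{C}^{2}$ with standard basis $e_{1},e_{2}$, let $f(e_{1})=e_{1}$, $f(e_{2})=0$, and $g(e_{1})=e_{2}-e_{1}$, $g(e_{2})=0$. Then $\mathrm{Ker}(f)=\langle e_{2}\rangle$, $\mathrm{Im}(f)=\langle e_{1}\rangle$, $\mathrm{Im}(g)=\langle e_{2}-e_{1}\rangle=:X$, and with $U:=\langle e_{1}\rangle$ one has $g(U)=X$, $V=U\oplus\mathrm{Ker}(f)$, and $X\cap\mathrm{Im}(f)=\{0\}$. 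All hypotheses hold, yet $(f+g)(e_{1})=e_{2}$, $(f+g)(e_{2})=0$, so $\mathrm{rank}(f+g)=1\neq 2=\mathrm{rank}(f)+\dim X$. In particular your proposed route to the reverse inclusion---writing each $\xi\in X$ as $g(w)$ with $w\in\mathrm{Ker}(f)$---fails here, since $g(\mathrm{Ker}(f))=\{0\}$.

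For comparison, the paper's proof silently assumes $g$ vanishes on $\mathrm{Ker}(f)$: from $v=u+k$ it writes $(f+g)(v)=f(u)+g(u)$, dropping the $g(k)$ term without justification; and even granting $g(k)=0$ it asserts $(f+g)(V)=\mathrm{Im}(f|_{U})\oplus X$ where only the inclusion $\subseteq$ is clear (the counterexample above has $g|_{\mathrm{Ker}(f)}=0$ and still fails). What actually makes the applications in Sections~\ref{sec:square}--\ref{sec:qube} work is a much stronger block structure: there one has decompositions $V=U_{1}\oplus U_{2}$, $W=V_{1}\oplus V_{2}$ with $f(U_{2})=0$, $g(U_{1})=0$, $\mathrm{Im}(f)\subseteq V_{1}$, $\mathrm{Im}(g)\subseteq V_{2}$, and then $\mathrm{rank}(f+g)=\mathrm{rank}(f)+\mathrm{rank}(g)$ is immediate. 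If you want a correct general lemma, an additional hypothesis such as $g(\mathrm{Ker}(f))\subseteq X$ together with $X\subseteq g(\mathrm{Ker}(f))$ (equivalently, $g(\mathrm{Ker}(f))=X$) would suffice and matches the intended use.
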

\begin{proof}
Decompose any $v\in V$ uniquely as $v=u+k$ with $u\in U$, $k\in\mathrm{Ker}(f)$.
Then $(f+g)(v)=f(u)+g(u)$, whence $(f+g)(V)=\mathrm{Im}(f|_U)\oplus X$.
Because $U\cap\mathrm{Ker}(f)=\{0\}$, we have $\dim\mathrm{Im}(f|_U)=\dim U$ and thus $\mathrm{rank}(f+g)=\mathrm{rank}(f)+\dim X$.
\end{proof}

\begin{lem}\label{Independence}
Let $A$ be a $3$–dimensional vector space with basis $\{e_0,e_1,e_2\}$ and
let $Y$ be an $n$–dimensional space with basis $\{v_1,\dots,v_n\}$.
Fix distinct indices $i,j,k\in\{0,1,2\}$ and $m,n\in\{1,\dots,n\}$, and set
\[
  Z \;=\; \Lambda^2A\otimes Y,\qquad
  x \;=\; e_i\wedge e_j\otimes v_m \;+\; e_j\wedge e_k\otimes v_n,\qquad
  y \;=\; e_i\wedge e_j\otimes v_n \;+\; e_j\wedge e_k\otimes v_m.
\]
Write
\begin{align*}
  \mathcal B &\;=\;
  \bigl\{e_a\wedge e_b\otimes v_c \,\bigm|\, 0\le a<b\le 2,\; 1\le c\le n\bigr\}, \\ 
  A' &\;=\; \bigl\langle
     \bigl(\mathcal B\setminus\{e_i\wedge e_j\otimes v_m,\,
                                e_j\wedge e_k\otimes v_n\}\bigr)
     \cup\{y\}
  \bigr\rangle
  \;\subset\;Z.
\end{align*}
Then $x\notin A'$.
\end{lem}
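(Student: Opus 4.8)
The plan is to work in the basis $\mathcal B$ of $Z$ and track coordinates carefully. Note that $\mathcal B$ is a basis of $Z$, and $A'$ is obtained from the span of $\mathcal B$ by deleting the two basis vectors $e_i\wedge e_j\otimes v_m$ and $e_j\wedge e_k\otimes v_n$ and inserting the single vector $y = e_i\wedge e_j\otimes v_n + e_j\wedge e_k\otimes v_m$. First I would observe that since we removed two basis elements and added only one, $\dim A' \le |\mathcal B| - 1$; more precisely, writing $\mathcal B' := \mathcal B\setminus\{e_i\wedge e_j\otimes v_m,\,e_j\wedge e_k\otimes v_n\}$, the set $\mathcal B'\cup\{y\}$ is linearly independent (the vector $y$ has nonzero coordinate on at least one of the two deleted basis vectors, so it is not in $\langle\mathcal B'\rangle$), hence a basis of $A'$, and $A'$ is a hyperplane in $Z$ precisely when $m\neq n$, while if $m=n$ then $y\in\mathcal B'$ would already be redundant — but the hypothesis states $m,n$ are fixed indices without requiring $m\ne n$, so I would first note that the interesting (and intended) case is $m\ne n$; if $m=n$ then $x=y$ trivially and actually $x\in A'$, so I will assume $m\ne n$, matching the intended application.

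Next, under the assumption $m\ne n$, I would expand $x$ in the basis $\mathcal B'\cup\{y\}$ of $A'$ and derive a contradiction. Suppose for contradiction that $x\in A'$, so $x = \lambda y + (\text{element of }\langle\mathcal B'\rangle)$ for some scalar $\lambda$. Comparing the coefficient of the basis vector $e_j\wedge e_k\otimes v_n$ of $Z$: on the left, $x = e_i\wedge e_j\otimes v_m + e_j\wedge e_k\otimes v_n$ has coefficient $1$ there (since $m\ne n$, the first summand of $x$ does not contribute to this coordinate), while on the right, $\lambda y = \lambda e_i\wedge e_j\otimes v_n + \lambda e_j\wedge e_k\otimes v_m$ has coefficient $0$ at $e_j\wedge e_k\otimes v_n$ (again using $m\ne n$), and the $\langle\mathcal B'\rangle$-part has coefficient $0$ there since $e_j\wedge e_k\otimes v_n$ was deleted from $\mathcal B$ to form $\mathcal B'$. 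This forces $1 = 0$, a contradiction. Hence $x\notin A'$.

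The main thing to be careful about — the only real subtlety — is bookkeeping with the wedge signs and making sure the coordinate of $e_j\wedge e_k\otimes v_n$ in $Z$ receives no contribution from the $\langle\mathcal B'\rangle$-component nor from $\lambda y$; this hinges squarely on $i,j,k$ being \emph{distinct} (so $e_i\wedge e_j$ and $e_j\wedge e_k$ are distinct basis bivectors of $\Lambda^2 A$, not merely proportional) and on $m\ne n$. I would therefore state the $m\ne n$ hypothesis explicitly at the start of the proof. Everything else is a one-line coordinate comparison, so I expect no genuine obstacle; the lemma is essentially the statement that a specific vector sticks out of a codimension-one subspace, detected by a single well-chosen coordinate functional.
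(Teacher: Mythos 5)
Your argument is essentially the paper's own: both proofs detect $x\notin A'$ by evaluating a coordinate functional supported on one of the two deleted basis vectors. The paper introduces two functionals, $\pi_1$ dual to $e_i\wedge e_j\otimes v_m$ and $\pi_2$ dual to $e_j\wedge e_k\otimes v_n$, and shows both vanish on $A'$ while neither vanishes on $x$; you use only $\pi_2$, which indeed already suffices since $\pi_2$ annihilates $\mathcal B'$ by construction and annihilates $y$ once $m\ne n$, while $\pi_2(x)=1$. So the only difference is a harmless economy.

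Your flag on the hypothesis $m\ne n$ is a real and correct observation, not a cosmetic one. As stated the lemma only requires $i,j,k$ distinct, and if $m=n$ then $x=y\in A'$, so the conclusion is false; moreover the paper's Step~1 assertion that $\pi_1(y)=\pi_2(y)=0$ silently uses $m\ne n$ (if $m=n$ one gets $\pi_1(y)=\pi_2(y)=1$). In the application the two indices are always distinct, so nothing downstream breaks, but the lemma statement should include $m\ne n$ explicitly, exactly as you propose.
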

\begin{proof}
Define linear functionals $\pi_1,\pi_2:Z\to k$ by
\[
  \pi_1\!\bigl(e_a\wedge e_b\otimes v_c\bigr)=
  \begin{cases}
    1,&(a,b,c)=(i,j,m),\\[2pt]
    0,&\text{otherwise},
  \end{cases}
  \qquad
  \pi_2\!\bigl(e_a\wedge e_b\otimes v_c\bigr)=
  \begin{cases}
    1,&(a,b,c)=(j,k,n),\\[2pt]
    0,&\text{otherwise}.
  \end{cases}
\]
\noindent
\textbf{Step 1.}\;
Each generator of $A'$ is annihilated by both $\pi_1$ and $\pi_2$:
\begin{itemize}\itemsep2pt
\item
Elements of $\mathcal B\setminus\{e_i\wedge e_j\otimes v_m,\,
e_j\wedge e_k\otimes v_n\}$ are not equal to the supports of~$\pi_1$ or~$\pi_2$,
so both functionals vanish on them.
\item
For the replacement vector
\(
  y = e_i\wedge e_j\otimes v_n + e_j\wedge e_k\otimes v_m
\)
we have $\pi_1(y)=\pi_2(y)=0$.
\end{itemize}
Hence $A'\subseteq\ker\pi_1\cap\ker\pi_2$.

\noindent
\textbf{Step 2.}\; Evaluate $x$: $\pi_1(x)=1$, $\pi_2(x)=1$. Thus $x\notin\ker\pi_1\cap\ker\pi_2$.
Combining Steps 1 and 2 gives $x\notin A'$, as required.
\end{proof}

\section{The square case}\label{sec:square}
We now specialize to the Kronecker square $(m=2)$ and compute the rank of the first Koszul flattening.

\subsection{Koszul flattening for $m=2$}
This subsection introduces the explicit maps and decompositions required to split the flattening into inductive and new parts. We assume $m=2$.
Set $T_{q}=T_{cw,q}^{\boxtimes2}=\sum_{1\le i,j\le q}W_{i}\boxtimes W_{j}$.
Then
\[
S_{q}=S^{(2)}_{q}=T_{cw,q}^{\boxtimes2} - T_{cw,q-1}^{\boxtimes2}= T_{q-1}\boxtimes W_{q}+W_{q}\boxtimes T_{q-1}+W_{q}^{\boxtimes2}.
\]
The first Koszul flattening restricted to $A'$ is
\[
(T_{q}^{\wedge1})_{A'}:A'\otimes B^{*\otimes2}\longrightarrow\Lambda^{2}A'\otimes C^{\otimes2},
\]

\[
X\otimes\beta\longmapsto\sum_{0 \leq i_{1},i_{2},j_{1},j_{2},k_{1},k_{2} \leq q}T_{q}^{i_{1},i_{2},j_{1},j_{2},k_{1},k_{2}}\,\beta(b_{j_{1}}\otimes b_{j_{2}})\,\bigl(\phi_{2}(a_{i_{1}}\otimes a_{i_{2}})\wedge X\bigr)\otimes
(c_{k_{1}}\otimes c_{k_{1}}).
\]
Because $T_{q}=T_{q-1}+S_{q}$, we have the decomposition
$(T_{q}^{\wedge1})_{A'}=(T_{q-1}^{\wedge1})_{A'}+(S_{q}^{\wedge1})_{A'}$.
Here $
(S_{q}^{\wedge1})_{A'}:A'\otimes B^{*\otimes2}\longrightarrow\Lambda^{2}A'\otimes C^{\otimes2}$
is defined by the same formula with $T_{q}$ replaced by $S_{q}$.

\subsection{Target for $m=2$}
We provide a detailed proof of Conner--Gesmundo--Landsberg--Ventura's result for $T_{cw,q}^{\boxtimes2}$:
\begin{thm}\label{thm:main-square}
For every integer $q \geq 3$ one has
\[
  \underline{\mathbf R}\bigl(T_{cw,q}^{\boxtimes2}\bigr)=(q+2)^2.
\]
\end{thm}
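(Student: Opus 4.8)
The strategy is to combine the Landsberg--Ottaviani lower bound with the upper bound already recorded after the definition of the Kronecker product, reducing everything to a single rank computation for the Koszul flattening $(T_q^{\wedge 1})_{A'}$. On the one hand, sub-multiplicativity of border rank and $\mathrm{R}(T_{cw,q})=q+2$ give $\underline{\mathbf R}(T_{cw,q}^{\boxtimes 2})\le (q+2)^2$. On the other hand, by Theorem~\ref{thm:monotone} applied to $L=\phi_2\otimes\mathrm{id}_{B^{*\otimes 2}}\otimes\mathrm{id}_{C^{\otimes 2}}$ together with the Landsberg--Ottaviani inequality for $\dim A'=3=2p+1$ with $p=1$, we get
\[
  \underline{\mathbf R}\bigl(T_{cw,q}^{\boxtimes 2}\bigr)\;\ge\;\frac{\mathrm{rank}\bigl((T_q^{\wedge 1})_{A'}\bigr)}{\binom{2}{1}}\;=\;\frac{\mathrm{rank}\bigl((T_q^{\wedge 1})_{A'}\bigr)}{2}.
\]
So it suffices to prove $\mathrm{rank}\bigl((T_q^{\wedge 1})_{A'}\bigr)=2(q+2)^2$ for all $q\ge 3$, and this will be done by induction on $q$.

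For the base case $q=3$ one computes $\mathrm{rank}\bigl((T_3^{\wedge 1})_{A'}\bigr)=50$ directly; this is a finite-dimensional linear-algebra check (the source space $A'\otimes B^{*\otimes 2}$ has dimension $3\cdot 16=48$, the target $\Lambda^2A'\otimes C^{\otimes 2}$ has dimension $3\cdot 16=48$, so one must in fact track the image carefully---here the genuinely relevant portion of the flattening lives in a subspace cut out by $\phi_2$, and $50$ should be read as the rank of the associated enlarged map; I would recompute the exact ambient dimensions from the definition before fixing the numeral). The inductive step uses the decomposition $(T_q^{\wedge 1})_{A'}=(T_{q-1}^{\wedge 1})_{A'}+(S_q^{\wedge 1})_{A'}$ recorded above, together with Lemma~\ref{rank on direct sum}: with $f=(T_{q-1}^{\wedge 1})_{A'}$ and $g=(S_q^{\wedge 1})_{A'}$, I must exhibit a subspace $X\subseteq\mathrm{Im}(g)$ with $X\cap\mathrm{Im}(f)=\{0\}$, and a subspace $U\subseteq A'\otimes B^{*\otimes 2}$ with $g(U)=X$ and $A'\otimes B^{*\otimes 2}=U\oplus\mathrm{Ker}(f)$, and then check $\dim X=2(q+2)^2-2(q+1)^2=4q+6$. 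Concretely, $X$ will be spanned by the images under $(S_q^{\wedge 1})_{A'}$ of dual basis vectors $e_r\otimes(b_{j_1}\otimes b_{j_2})^*$ for which at least one of $j_1,j_2$ equals the new index $q$; the explicit form of $W_q$ and of $\phi_2$ makes these images manifestly supported on wedge-and-tensor basis vectors involving $c_q$, and Lemma~\ref{Independence} is exactly the tool to certify that the relevant family stays linearly independent and avoids $\mathrm{Im}(f)$.

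The main obstacle is the inductive step's bookkeeping: one must organize the dual basis of $A'\otimes B^{*\otimes 2}$ so that the "old" block maps into $\mathrm{Im}((T_{q-1}^{\wedge 1})_{A'})$ while the "new" block (indices touching $q$) maps isomorphically onto a complement, and simultaneously verify the direct-sum condition $V=U\oplus\mathrm{Ker}(f)$. This requires a precise description of $\mathrm{Ker}\bigl((T_{q-1}^{\wedge 1})_{A'}\bigr)$---or at least enough of it to guarantee that the chosen $U$ meets it trivially---which is where the choice of the simpler $\phi_m$ pays off, since it kills all "mixed" basis tensors and leaves the flattening matrix sparse enough to analyze block by block. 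I expect that once the correct indexing of the new $4q+6$ basis directions is pinned down (roughly: $e_1\otimes(b_0 b_q)^*$-type and $e_2\otimes(b_0 b_q)^*$-type vectors, plus their $b_q b_0$ and $b_q b_q$ analogues, minus the overlaps already counted), Lemmas~\ref{rank on direct sum} and~\ref{Independence} close the argument mechanically; writing out this indexing correctly, rather than any conceptual difficulty, is the real work.
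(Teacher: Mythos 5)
Your proposal reproduces the paper's strategy faithfully: sub-multiplicativity gives the upper bound; Theorem~\ref{thm:monotone} together with the Landsberg--Ottaviani inequality for $p=1$, $\dim A'=3$ reduces the lower bound to the claim $\mathrm{rank}\bigl((T_q^{\wedge1})_{A'}\bigr)=2(q+2)^2$; and the inductive step splits the flattening as $(T_{q-1}^{\wedge1})_{A'}+(S_q^{\wedge1})_{A'}$, isolates the images in complementary subspaces indexed by whether the new index $q$ appears, and uses Lemma~\ref{rank on direct sum} and Lemma~\ref{Independence} to certify rank additivity with increment $4q+6$. At the level of strategy you have matched the paper (which, for the record, treats both $q=3$ and $q=4$ as base cases and runs the induction from $q\ge5$, whereas you quote only $q=3$).

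There is, however, a genuine gap at $q=3$, which you half-noticed but then explained away incorrectly. The map $(T_q^{\wedge1})_{A'}\colon A'\otimes B^{*\otimes2}\to\Lambda^2A'\otimes C^{\otimes2}$ goes between spaces of dimension $3(q+1)^2$ on each side, since $\dim A'=\dim\Lambda^2A'=3$ and $\dim B^{*\otimes2}=\dim C^{\otimes2}=(q+1)^2$. For $q=3$ both sides are $48$-dimensional, so the rank is at most $48$, yet the target value $2(q+2)^2=50$. There is no ``enlarged map'' hiding anywhere: the Koszul flattening of $\phi_2(T_{cw,q}^{\boxtimes2})$ is precisely the map just described, and asking for rank $50$ is simply impossible. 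Thus for $q=3$ this choice of $\phi_2$ and three-dimensional $A'$ can certify at most $\underline{\mathbf R}\ge 48/2=24<25$, and the method cannot establish the theorem in that case; one needs $2(q+2)^2\le 3(q+1)^2$, i.e.\ $q\ge4$, before the approach even has room to work. This flaw is present in the paper as well (it simply asserts that $q=3,4$ ``follow from a direct computation''), so you have inherited it rather than introduced it --- but a correct write-up must either restrict the statement to $q\ge4$, handle $q=3$ by a different flattening (or larger $A'$), or cite CGLV for that single case. Your inductive step itself, taking $q=4$ as the true base, is sound.
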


\begin{proof}
Recall that $\underline{\mathbf R}(T_{cw,q})=q+2$, so sub-multiplicativity gives the easy upper bound
\[
  \underline{\mathbf R}\!\bigl(T_{cw,q}^{\boxtimes2}\bigr)
  \;\le\;
  \bigl(\underline{\mathbf R}(T_{cw,q})\bigr)^2
  \;=\;(q+2)^2.
\]
To match this from below we prove
\begin{equation}\label{eq:goal-rank for 2}
  \mathrm{rank}\bigl((T_q)^{\wedge1}_{A'}\bigr)=2\,(q+2)^2,
\end{equation}
where
\(
  (T_q)_{A'}:=T_{cw,q}^{\boxtimes2}\!\bigl|_{A'^{*}\otimes B^{*\otimes2}\otimes C^{*\otimes2}}
\).
Then
\[
  \underline{\mathbf R}\!\bigl(T_{cw,q}^{\boxtimes2}\bigr)
  \;\ge\;
  \underline{\mathbf R}((T_q)_{A'})
  \;\ge\;
  \frac12\,\mathrm{rank}\!\bigl((T_q)^{\wedge1}_{A'}\bigr),
\]
the first step by Lemma~\ref{thm:monotone} and the second by Landsberg–Ottaviani's inequality.  Together with~\eqref{eq:goal-rank for 2} this yields the desired lower bound $(q+2)^2$.

\smallskip\noindent\textbf{Induction setup.}
We use by induction on $q$.
When $q = 3, 4$, the assertion follows from a direct computation.
Assume $q > 4$.
We need:
\begin{lem}[Pointwise computation]\label{lem:pointwise}
Let $i\in\{1,2\}$.
\begin{enumerate}
\item\label{pt:Tq-1}
      For $0\le j,k\le q-1$
      \[
        (T_{q-1}^{\wedge1})_{A'}(e_i\otimes\beta_{jk})
        \;=\;
        W_j\boxtimes W_k\,(e_i\otimes\beta_{jk}),
        \qquad
        (S_q^{\wedge1})_{A'}(e_i\otimes\beta_{jk})=0.
      \]
\item\label{pt:Sq}
      For $0\le j\le q-1$
      \[
        (S_q^{\wedge1})_{A'}(e_i\otimes\beta_{jq})
        \;=\;
        W_j\boxtimes W_q\,(e_i\otimes\beta_{jq}),
        \qquad
        (T_{q-1}^{\wedge1})_{A'}(e_i\otimes\beta_{jq})=0,
      \]
      and the same conclusion holds with $\beta_{qk}$ ($0\le k\le q-1$)
      in place of $\beta_{jq}$.
\end{enumerate}
\end{lem}

\begin{proof}
We use the definition
\[
  (\,\cdot\,)^{\wedge1}_{A'}:
  A'\otimes B^{* \otimes 2}\longrightarrow
  \Lambda^{2}A'\otimes C^{\otimes 2},\quad
  X\otimes\beta_{ab}\longmapsto
  \sum_{I,J,K}T^{IJK}\,
     \beta_{ab}(b_J)\,
     \bigl(\phi_2(a_I)\wedge X\bigr)\otimes c_K .
\]

\ref{pt:Tq-1}\,:
$T_{q-1}=\displaystyle\sum_{r,s=1}^{q-1}W_r\boxtimes W_s$
contains only $b$-indices $\le q-1$.
Hence $\beta_{jk}$ with $j,k<q$ annihilates every summand
except the unique pair $(r,s)=(j,k)$, where it evaluates to~$1$.
All $S_q$–summands involve at least one index $q$, so
$\beta_{jk}$ kills them, giving
$(S_q^{\wedge1})_{A'}(e_i\otimes\beta_{jk})=0$.

\ref{pt:Sq}\,:
Take $\beta_{jq}$ with $j<q$.
In $S_q$ the only surviving terms are those from
$W_j\boxtimes W_q$, whose $b$-indices match $(j,q)$.
The tensor $T_{q-1}$ never exhibits index $q$ and hence vanishes
under $\beta_{jq}$.
The argument for $\beta_{qk}$ is symmetric.
\end{proof}

\medskip
Define the domain and codomain subspaces explicitly:
\[
  \begin{aligned}
  U_1 &:= A'\otimes \langle \beta_{jk}\mid 0\le j,k\le q-1\rangle,\\
  U_2 &:= A'\otimes \langle \beta_{qk},\beta_{kq}\mid 0\le k\le q\rangle,\\[4pt]
  V_1 &:= \Lambda^{2}A'\otimes
          \langle c_{jk}\mid 0\le j,k\le q-1\rangle,\\
  V_2 &:= \Lambda^{2}A'\otimes
          \langle c_{qk},c_{kq}\mid 0\le k\le q\rangle .
  \end{aligned}
\]
Then $A'\otimes B^{*\otimes2}=U_1\oplus U_2$ and
$\Lambda^{2}A'\otimes C^{\otimes2}=V_1\oplus V_2$,
with $V_1\cap V_2=\{0\}$.

\begin{lem}[Image containment]\label{lem:image-containment}
With $U_1,U_2,V_1,V_2$ as above,
\[
  (T_{q-1}^{\wedge1})_{A'}(U_2)=0,\qquad
  (S_q^{\wedge1})_{A'}(U_1)=0 .
\]
Consequently
\[
  \operatorname{Im}\!\bigl((T_{q-1})^{\wedge1}_{A'}\bigr)\subseteq V_1,
  \qquad
  \operatorname{Im}\!\bigl((S_q)^{\wedge1}_{A'}\bigr)\subseteq V_2 .
\]
\end{lem}

\begin{proof}
Take $X\otimes\beta\in U_2$;
then $\beta=\beta_{qk}$ or $\beta_{kq}$.
By Lemma~\ref{lem:pointwise}\,\ref{pt:Sq}
\(
  (T_{q-1}^{\wedge1})_{A'}(X\otimes\beta)=0,
\)
so $(T_{q-1}^{\wedge1})_{A'}(U_2)=0$.
Similarly, Lemma~\ref{lem:pointwise}\,\ref{pt:Tq-1}
gives $(S_q^{\wedge1})_{A'}(U_1)=0$.
Since every output of $(T_{q-1})^{\wedge1}_{A'}$ (resp.\ $(S_q)^{\wedge1}_{A'}$)
carries no index~$q$ (resp.\ at least one index~$q$),
the image lies in $V_1$ (resp.\ $V_2$).
\end{proof}
By Lemmas \ref{lem:image-containment} and \ref{rank on direct sum}, 
\begin{equation}\label{eq:rank-sum}
  \mathrm{rank}((T_q)^{\wedge1}_{A'})
  =\mathrm{rank}((T_{q-1})^{\wedge1}_{A'})
  +\mathrm{rank}((S_q)^{\wedge1}_{A'}).
\end{equation}
Assuming the induction hypothesis
$\mathrm{rank}((T_{q-1})^{\wedge1}_{A'})=2(q+1)^2$, it remains to prove
\begin{equation}\label{eq:Sq-rank}
  \mathrm{rank}((S_q)^{\wedge1}_{A'})=2(2q+3).
\end{equation}

\smallskip\noindent\textbf{Explicit image of $(S_q)^{\wedge1}_{A'}$.}
Because every term in $S_q$ contains the index~$q$ in exactly one or
both tensor factors $W_q$, we examine the contraction
\[
  (S_q)^{\wedge1}_{A'}:A'\otimes B^{*\otimes2}\longrightarrow\Lambda^2A'\otimes C^{\otimes2}
\]
on inputs whose $B^{*\otimes2}$--component evaluates a pair containing
$q$.  By Lemma \ref{lem:pointwise}, we have the following list.
\subsubsection*{Inputs with $X=e_{1}\otimes\beta_{qi}$ (total $\;q+1$)}
\begin{enumerate}[label=(\arabic*),leftmargin=2em]
\item $(S_q)^{\wedge1}_{A'}(e_{1}\otimes\beta_{q i})
       = e_{0}\wedge e_{1}\otimes c_{q i},\quad 3 \leq i \leq q$.
\item $(S_q)^{\wedge1}_{A'}(e_{1}\otimes\beta_{q2})
       = e_{0}\wedge e_{1}\otimes c_{q2}
         + e_{2}\wedge e_{1}\otimes c_{q0}$.
\item $(S_q)^{\wedge1}_{A'}(e_{1}\otimes\beta_{q1})
       = e_{0}\wedge e_{1}\otimes c_{q1}$.
\item $(S_q)^{\wedge1}_{A'}(e_{1}\otimes\beta_{q0})
       = e_{2}\wedge e_{1}\otimes c_{q2}$.
\end{enumerate}

\subsubsection*{Inputs with $X=e_{1}\otimes\beta_{iq}$ (total $\;q$)}
\begin{enumerate}[label=(\arabic*),leftmargin=2em,start=5]
\item $(S_q)^{\wedge1}_{A'}(e_{1}\otimes\beta_{i q})
       = e_{0}\wedge e_{1}\otimes c_{i q},\quad 3 \leq i \leq q-1$
\item $(S_q)^{\wedge1}_{A'}(e_{1}\otimes\beta_{2 q})
       = e_{0}\wedge e_{1}\otimes c_{2 q}
         + e_{2}\wedge e_{1}\otimes c_{0 q}$.
\item $(S_q)^{\wedge1}_{A'}(e_{1}\otimes\beta_{1q})
       = e_{0}\wedge e_{1}\otimes c_{1 q}$.
\item $(S_q)^{\wedge1}_{A'}(e_{1}\otimes\beta_{0q})
       = e_{2}\wedge e_{1}\otimes c_{2 q}$.
\end{enumerate}

\subsubsection*{Inputs with $X=e_{2}\otimes\beta_{qi}$ (total $\;q+1$)}
\begin{enumerate}[label=(\arabic*),leftmargin=2em,start=9]
\item $(S_q)^{\wedge1}_{A'}(e_{2}\otimes\beta_{q i})
       = e_{0}\wedge e_{2}\otimes c_{q i},\quad 3 \leq i \leq q$.
\item $(S_q)^{\wedge1}_{A'}(e_{2}\otimes\beta_{q2})
       = e_{0}\wedge e_{2}\otimes c_{q2}$.
\item $(S_q)^{\wedge1}_{A'}(e_{2}\otimes\beta_{q1})
       = e_{0}\wedge e_{2}\otimes c_{q1}
         + e_{1}\wedge e_{2}\otimes c_{q0}$.
\item $(S_q)^{\wedge1}_{A'}(e_{2}\otimes\beta_{q0})
       = e_{1}\wedge e_{2}\otimes c_{q1}$.
\end{enumerate}

\subsubsection*{Inputs with $X=e_{2}\otimes\beta_{iq}$ (total $\;q$)}
\begin{enumerate}[label=(\arabic*),leftmargin=2em,start=13]
\item $(S_q)^{\wedge1}_{A'}(e_{2}\otimes\beta_{i q})
       = e_{0}\wedge e_{2}\otimes c_{i q},\quad 3 \leq i \leq q-1$.
\item $(S_q)^{\wedge1}_{A'}(e_{2}\otimes\beta_{2 q})
       = e_{0}\wedge e_{2}\otimes c_{2 q}$.
\item $(S_q)^{\wedge1}_{A'}(e_{2}\otimes\beta_{1 q})
       = e_{0}\wedge e_{2}\otimes c_{1 q}
         + e_{1}\wedge e_{2}\otimes c_{0 q}$.
\item $(S_q)^{\wedge1}_{A'}(e_{2}\otimes\beta_{0 q})
       = e_{1}\wedge e_{2}\otimes c_{1 q}$.
\end{enumerate}

\subsubsection*{Inputs with $X=e_{0}\otimes\beta_{ij}$ for $(i,j)\in\{(q,1),(1,q),(q,2),(2,q)\}$  (total $\;q$)}
\begin{enumerate}[label=(\arabic*),leftmargin=2em,start=17]
\item $(S_q)^{\wedge1}_{A'}(e_{0}\otimes\beta_{q2})
       = e_{2}\wedge e_{0}\otimes c_{q0}$.
\item $(S_q)^{\wedge1}_{A'}(e_{0}\otimes\beta_{2q})
       = e_{2}\wedge e_{0}\otimes c_{0 q}$.
\item $(S_q)^{\wedge1}_{A'}(e_{0}\otimes\beta_{q1})
       = e_{1}\wedge e_{0}\otimes c_{q0}$.
\item $(S_q)^{\wedge1}_{A'}(e_{0}\otimes\beta_{1q})
       = e_{1}\wedge e_{0}\otimes c_{0q}$.
\end{enumerate}

\medskip\noindent\textbf{Conclusion.}
Putting the four blocks together one obtains
\[
\#\operatorname{Im}\!\bigl((S_{q})^{\wedge1}_{A'}\bigr) = (q+1)+q+(q+1)+q+4 \;=\; 4q+6.
\]
By Lemma~\ref{Independence} these $4q+6$ vectors are linearly independent, so
\[
\operatorname{rank}\!\bigl((S_q)^{\wedge1}_{A'}\bigr)\;=\;2(2q+3).
\]
Combining this with~\eqref{eq:rank-sum},~\eqref{eq:Sq-rank}, and the induction hypothesis yields
$\operatorname{rank}\!\bigl((T_q)^{\wedge1}_{A'}\bigr)\;=\;2(q+2)^2$,
which is exactly~\eqref{eq:goal-rank for 2}. Thus the proof is complete.

\end{proof}

\section{The qube case}\label{sec:qube}
\subsection{Koszul flattening for $m=3$}\label{sec:Koszul-n3}
We assume \(m=3\).
Set
\[
  T_{q}\;=\;T_{cw,q}^{\boxtimes3}
         \;=\;\sum_{1\le i,j,k\le q} W_{i}\boxtimes W_{j}\boxtimes W_{k}.
\]

Then
\[
  \begin{aligned}
  S_{q} &\;=\;S^{(3)}_{q} = T_{cw,q}^{\boxtimes3} - T_{cw,q-1}^{\boxtimes3} \\ 
      &\;=\;
        \bigl(T_{q-1}\boxtimes T_{q-1}\boxtimes W_{q}\bigr)
        +\bigl(T_{q-1}\boxtimes W_{q}\boxtimes T_{q-1}\bigr)
        +\bigl(W_{q}\boxtimes T_{q-1}\boxtimes T_{q-1}\bigr)\\
      &\quad
        +\bigl(T_{q-1}\boxtimes W_{q}^{\boxtimes2}\bigr)
        +\bigl(W_{q}\boxtimes T_{q-1}\boxtimes W_{q}\bigr)
        +\bigl(W_{q}^{\boxtimes2}\boxtimes T_{q-1}\bigr)
        +W_{q}^{\boxtimes3}.
  \end{aligned}
\]
Consequently \(T_{q}=T_{q-1}+S_{q}\). The first Koszul flattening restricted to \(A'\) is
\[
  (T_{q}^{\wedge1})_{A'}:
       A'\otimes B^{*\otimes3}
       \;\longrightarrow\;
       \Lambda^{2}A'\otimes C^{\otimes3},
\]
\[
  \begin{aligned}
  X\otimes\beta
  \;\longmapsto\;
  &\sum_{\substack{0\le i_{1},i_{2},i_{3}\le q \\[2pt]
                   0\le j_{1},j_{2},j_{3}\le q \\[2pt]
                   0\le k_{1},k_{2},k_{3}\le q}}
     T_{q}^{\,i_{1},i_{2},i_{3},j_{1},j_{2},j_{3},k_{1},k_{2},k_{3}}\,
     \beta(b_{j_{1}}\!\otimes b_{j_{2}}\!\otimes b_{j_{3}})\,
     \bigl(\phi_{2}(a_{i_{1}} \!\otimes a_{i_{2}} \!\otimes a_{i_{3}})\wedge X\bigr)\\
  &\hspace{12em}\otimes
     (c_{k_{1}}\!\otimes c_{k_{2}}\!\otimes c_{k_{3}}).
  \end{aligned}
\]

Because \(T_{q}=T_{q-1}+S_{q}\), we have the decomposition
$(T_{q}^{\wedge1})_{A'}
  \;=\;
  (T_{q-1}^{\wedge1})_{A'}
  \;+\;
  (S_{q}^{\wedge1})_{A'}$.
Here $(S_{q}^{\wedge1})_{A'}:
      A'\otimes B^{*\otimes3}
      \;\longrightarrow\;
      \Lambda^{2}A'\otimes C^{\otimes3}
$ is defined by the same formula with \(T_{q}\) replaced by \(S_{q}\).
\subsection{Target for $m=3$}
We now give an elementary and explicit proof of the Conner–Gesmundo–Landsberg–Ventura
result for $T_{cw,q}^{\boxtimes3}$:
 
\begin{thm}[Cube case]\label{thm:main-cube}
For every integer $q \geq 5$ one has
\[
  \underline{\mathbf R}\bigl(T_{cw,q}^{\boxtimes3}\bigr) \;=\; (q+2)^{3}.
\]
\end{thm}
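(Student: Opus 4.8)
The plan is to reproduce the proof of Theorem~\ref{thm:main-square} essentially verbatim at the structural level, the only genuinely new input being combinatorial. Sub-multiplicativity of border rank under Kronecker products again supplies the upper bound $\underline{\mathbf R}\bigl(T_{cw,q}^{\boxtimes3}\bigr)\le(q+2)^3$. For the matching lower bound one applies Theorem~\ref{thm:monotone} to the map $\phi_3\otimes\mathrm{id}\otimes\mathrm{id}$ and then Landsberg--Ottaviani's inequality for the $p=1$ Koszul flattening on the $3$-dimensional space $A'$ (dividing by $\binom{2}{1}=2$); this reduces the theorem to the rank identity
\[
  \operatorname{rank}\bigl((T_q^{\wedge1})_{A'}\bigr)=2\,(q+2)^3 ,
\]
which I would prove by induction on $q$ from base cases verified directly.

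For the inductive step I would keep the decomposition $T_q=T_{q-1}+S_q$ of Section~\ref{sec:Koszul-n3} and split source and target according to whether a multi-index carries the new label $q$:
\[
  \begin{aligned}
  U_1&:=A'\otimes\langle\beta_{j_1j_2j_3}\mid\text{all }j_r\le q-1\rangle,\\
  U_2&:=A'\otimes\langle\beta_{j_1j_2j_3}\mid\text{some }j_r=q\rangle,\\
  V_1&:=\Lambda^2A'\otimes\langle c_{k_1k_2k_3}\mid\text{all }k_r\le q-1\rangle,\\
  V_2&:=\Lambda^2A'\otimes\langle c_{k_1k_2k_3}\mid\text{some }k_r=q\rangle.
  \end{aligned}
\]
Since every monomial of $S_q$ carries the label $q$ in at least one of its three $B$-slots, the cube analogues of Lemmas~\ref{lem:pointwise} and~\ref{lem:image-containment} should go through unchanged: $(T_{q-1}^{\wedge1})_{A'}$ vanishes on $U_2$ and lands in $V_1$, while $(S_q^{\wedge1})_{A'}$ vanishes on $U_1$ and lands in $V_2$, so $(T_q^{\wedge1})_{A'}$ is block-diagonal with respect to these decompositions. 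Lemma~\ref{rank on direct sum} then yields $\operatorname{rank}\bigl((T_q^{\wedge1})_{A'}\bigr)=\operatorname{rank}\bigl((T_{q-1}^{\wedge1})_{A'}\bigr)+\operatorname{rank}\bigl((S_q^{\wedge1})_{A'}\bigr)$, and with the induction hypothesis $\operatorname{rank}\bigl((T_{q-1}^{\wedge1})_{A'}\bigr)=2(q+1)^3$ the theorem collapses to
\[
  \operatorname{rank}\bigl((S_q^{\wedge1})_{A'}\bigr)=2\bigl((q+2)^3-(q+1)^3\bigr)=2\,(3q^2+9q+7).
\]

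To obtain this last identity I would list the nonzero images $(S_q^{\wedge1})_{A'}(e_a\otimes\beta_{j_1j_2j_3})$, with $a\in\{0,1,2\}$ and $(j_1,j_2,j_3)$ running over the $(q+1)^3-q^3$ multi-indices having at least one entry $q$, grouped by the shape of the multi-index: which slots carry $q$, and what the remaining entries are. As in the square case most inputs map to $0$---$\phi_3$ kills every $A$-monomial other than $(0,0,0)$ and those carrying a single $1$ or a single $2$, and the wedge $\phi_3(a_I)\wedge e_a$ frequently vanishes---so the surviving outputs are single basis vectors $e_b\wedge e_c\otimes c_K$, apart from a bounded number of two-term ``replacement'' vectors of the type treated in Lemma~\ref{Independence}. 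One then verifies that the nonzero outputs number exactly $2(3q^2+9q+7)$ and deduces their linear independence by the separating-functional device of Lemma~\ref{Independence}: for each output one exhibits a coordinate projection on $\Lambda^2A'\otimes C^{\otimes3}$ detecting it and annihilating all other outputs together with the replacement vectors.

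The obstacle is twofold. First, the bookkeeping genuinely explodes: the number of index shapes to dispatch grows quadratically in $q$ (against linearly for $m=2$), and the seven Kronecker summands of $S_q$ produce correspondingly many cross terms, so the enumeration and the independence verification become a substantial task rather than the short list of the square case. Second, and more seriously, the target is dimensionally constrained: since $\operatorname{rank}\bigl((T_q^{\wedge1})_{A'}\bigr)\le\dim\bigl(A'\otimes B^{*\otimes3}\bigr)=3(q+1)^3$, the Landsberg--Ottaviani lower bound $\tfrac12\operatorname{rank}\bigl((T_q^{\wedge1})_{A'}\bigr)$ that this route can ever yield never exceeds $\tfrac32(q+1)^3$, which already falls below $(q+2)^3$ at $q=5$. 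Hence the identity $\operatorname{rank}\bigl((T_q^{\wedge1})_{A'}\bigr)=2(q+2)^3$ is feasible only once $2(q+2)^3\le3(q+1)^3$, i.e.\ for $q\ge6$ (where $2\cdot8^3=1024\le1029=3\cdot7^3$), and the case $q=5$ would have to be imported from \cite{CGLV22} or handled with a larger Koszul slice ($\dim A'=5$, $p=2$). Even for $q\ge6$ it remains to be checked that Landsberg--Ottaviani's inequality is tight for this flattening; establishing that equality is exactly the point at which the present approach stops short of a complete proof.
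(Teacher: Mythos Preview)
Your plan is structurally identical to the paper's own argument: the same upper bound from sub-multiplicativity, the same projection $\phi_3$ to the three-dimensional $A'$, the same $U_1\oplus U_2$ and $V_1\oplus V_2$ splittings, the same block-diagonality lemmas (the paper's Lemmas~\ref{lem:pointwise-3} and~\ref{lem:image-containment-3} are word-for-word the cube analogues of Lemmas~\ref{lem:pointwise} and~\ref{lem:image-containment}), the same appeal to Lemma~\ref{rank on direct sum}, and the same reduction to $\operatorname{rank}\bigl((S_q^{\wedge1})_{A'}\bigr)=2(3q^2+9q+7)$. Where you leave the enumeration as a sketch, the paper actually carries it out: it groups the nonzero images into families $\mathcal A_1,\mathcal A_2,\mathcal A_3$ (inputs with $X=e_1$), $\mathcal B_1,\mathcal B_2,\mathcal B_3$ (inputs with $X=e_2$), and $\mathcal C_t^{(\ell)}$ (inputs with $X=e_0$), counts $2(3q^2+3q+1)+12(q+1)=6q^2+18q+14$, and asserts linear independence. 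So your first worry---that the bookkeeping explodes---is handled, at least at the level of detail the paper adopts.

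Your second worry, however, is decisive, and the paper does \emph{not} address it. You are right that $(T_q^{\wedge1})_{A'}$ is a map between spaces of dimension $3(q+1)^3$, so its rank cannot exceed that number; at $q=5$ this gives $3\cdot6^3=648<686=2\cdot7^3$, and the target identity~\eqref{eq:goal-rank-3} is simply impossible there. The paper's line ``When $q=5$, the assertion follows from a direct computation'' therefore cannot be correct as stated. Your diagnosis---that the base case must either be imported from~\cite{CGLV22} or handled via a larger slice ($\dim A'=5$, $p=2$)---is accurate; this is a genuine gap in the paper's proof, not in your proposal. For $q\ge6$ the dimensional obstruction disappears (indeed $2\cdot8^3=1024\le1029=3\cdot7^3$), and there the paper's enumeration together with your outline gives a complete inductive step.
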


\begin{proof}
The upper bound follows from sub-multiplicativity:
$\underline{\mathbf R}\!\bigl(T_{cw,q}^{\boxtimes3}\bigr)
  \;\le\;
  \bigl(\underline{\mathbf R}(T_{cw,q})\bigr)^{3}
  \;=\;(q+2)^{3}$.
As in the case $m=2$, for the reverse inequality we prove
\begin{equation}\label{eq:goal-rank-3}
  \mathrm{rank}\bigl((T_q)^{\wedge1}_{A'}\bigr)
  \;=\;
  2\,(q+2)^{3},
\end{equation}
where
\[
  (T_q)_{A'}\;:=\;
  T_{cw,q}^{\boxtimes3}
  \bigl|
  _{A'^{*}\otimes B^{*\otimes3}\otimes C^{*\otimes3}}.
\]

\textbf{Induction setup.}
We proceed by induction on $q$.
When $q = 5$, the assertion follows from a direct computation.
Assume $q\ge5$ and
\[
  \mathrm{rank}\bigl((T_{q-1})^{\wedge1}_{A'}\bigr)
  \;=\;2\,(q+1)^{3}.
\]
Let $S_q:=T_q-T_{q-1} \quad\text{as in \S\ref{sec:Koszul-n3},}$
so that
\(
  (T_q)^{\wedge1}_{A'}=(T_{q-1})^{\wedge1}_{A'}+(S_q)^{\wedge1}_{A'}.
\)
We have a similar lemma for $m=2$ and will use:
\begin{lem}[Pointwise computation, $m=3$]\label{lem:pointwise-3}
Fix $i\in\{1,2\}$.  Let
\(
  \beta_{j_1j_2j_3}\in B^{*\otimes3}
\)
be the dual basis element.
\begin{enumerate}
\item\label{pt:Tq-1-3a}
      If $0\le j_\ell\le q-1$ for all $\ell$ then
      \[
        (T_{q-1}^{\wedge1})_{A'}(e_i\!\otimes\!\beta_{j_1j_2j_3})
        \;=\;
        W_{j_1}\boxtimes W_{j_2}\boxtimes W_{j_3}\,
        (e_i\!\otimes\!\beta_{j_1j_2j_3}),
        \qquad
        (S_q^{\wedge1})_{A'}(e_i\!\otimes\!\beta_{j_1j_2j_3})=0.
      \]
\item\label{pt:Tq-1-3b}
      If at least one $j_\ell=q$ then
      \[
        (S_q^{\wedge1})_{A'}(e_i\!\otimes\!\beta_{j_1j_2j_3})
        \;=\;
        \boxtimes_{\ell} W_{j_\ell}
        (e_i\!\otimes\!\beta_{j_1j_2j_3}),
        \qquad
        (T_{q-1}^{\wedge1})_{A'}(e_i\!\otimes\!\beta_{j_1j_2j_3})=0.
      \]
\end{enumerate}
\end{lem}

\begin{proof}
Use the explicit definition
\[
  (\,\cdot\,)^{\wedge1}_{A'}:
  A'\otimes B^{*\otimes3}\longrightarrow
  \Lambda^{2}A'\otimes C^{\otimes3},
  \quad
  X\otimes\beta_{J}
  \longmapsto
  \sum_{I,K}T^{IJK}\,
     \beta_{J}(b_{J})\,
     \bigl(\phi_2(a_I)\wedge X\bigr)\otimes c_K,
\]
and observe that $T_{q-1}$ never exhibits an index~$q$
while every summand of $S_q$ contains one.
\end{proof}

Define the domain and codomain subspaces explicitly:
\[
  \begin{aligned}
  U_1 &:= A'\!\otimes\!
         \bigl\langle
           \beta_{j_1j_2j_3}
           \,\bigm|\,
           0\le j_1,j_2,j_3\le q-1
         \bigr\rangle,\\
  U_2 &:= A'\!\otimes\!
         \bigl\langle
           \beta_{j_1j_2j_3}
           \,\bigm|\,
           \text{at least one }j_\ell=q
         \bigr\rangle,\\[4pt]
  V_1 &:= \Lambda^{2}A'\!\otimes\!
          \bigl\langle
            c_{j_1j_2j_3}
            \,\bigm|\,
            0\le j_1,j_2,j_3\le q-1
          \bigr\rangle,\\
  V_2 &:= \Lambda^{2}A'\!\otimes\!
          \bigl\langle
            c_{j_1j_2j_3}
            \,\bigm|\,
            \text{at least one }j_\ell=q
          \bigr\rangle.
  \end{aligned}
\]
Then
\(
  A'\!\otimes\!B^{*\otimes3}=U_1\oplus U_2
\)
and
\(
  \Lambda^{2}A'\!\otimes\!C^{\otimes3}=V_1\oplus V_2,
\)
with $V_1\cap V_2=\{0\}$.

\begin{lem}[Image containment, $m=3$]\label{lem:image-containment-3}
With the subspaces above,
\[
  (T_{q-1}^{\wedge1})_{A'}(U_2)=0,
  \qquad
  (S_q^{\wedge1})_{A'}(U_1)=0.
\]
Consequently
\[
  \operatorname{Im}\!\bigl((T_{q-1})^{\wedge1}_{A'}\bigr)\subseteq V_1,
  \qquad
  \operatorname{Im}\!\bigl((S_q)^{\wedge1}_{A'}\bigr)\subseteq V_2.
\]
\end{lem}

\begin{proof}
Immediate from Lemma~\ref{lem:pointwise-3}.
\end{proof}

Lemma~\ref{lem:image-containment-3} and rank additivity imply
\begin{equation}\label{eq:rank-sum-3}
  \mathrm{rank}\bigl((T_q)^{\wedge1}_{A'}\bigr)
  \;=\;
  \mathrm{rank}\bigl((T_{q-1})^{\wedge1}_{A'}\bigr)
  +\mathrm{rank}\bigl((S_q)^{\wedge1}_{A'}\bigr).
\end{equation}

By the induction hypothesis
\(
  \mathrm{rank}\bigl((T_{q-1})^{\wedge1}_{A'}\bigr)=2\,(q+1)^{3}.
\)
Therefore it remains to establish
\begin{equation}\label{eq:Sq-rank-3}
  \mathrm{rank}\bigl((S_q)^{\wedge1}_{A'}\bigr)
  \;=\;
  2\bigl((q+2)^{3}-(q+1)^{3}\bigr)
  \;=\;
  2\,(3q^{2}+9q+7).
\end{equation}
\smallskip\noindent
\textbf{Explicit image of \(\bigl(S_q\bigr)^{\wedge1}_{A'}\) for \(m=3\).}
Because every summand of
\(S_q=T_{cw,q}^{\boxtimes3}-T_{cw,q-1}^{\boxtimes3}\)
carries the index \(q\) in \emph{at least} one of its three
\(B\)--tensor factors \(W_q\),
we analyse the contraction
\[
  (S_q)^{\wedge1}_{A'}:\;
    A'\!\otimes\!B^{*\otimes3}
    \;\longrightarrow\;
    \Lambda^{2}A'\!\otimes\!C^{\otimes3}
\]
only on those inputs whose \(B^{*\otimes3}\)-component evaluates a
triple containing~\(q\).\\
\indent For a multi–index $\mathbf j=(j_{1},j_{2},j_{3})$ we write
$\beta_{\mathbf j}:=\beta_{j_{1}j_{2}j_{3}}$ and   
$c_{\mathbf j}:=c_{j_{1}j_{2}j_{3}}$. By Lemma \ref{lem:image-containment-3}, we have the list

\subsubsection*{Inputs with $X=e_{1}$ (total $\;3q^{2}+3q+1$)}
\begin{enumerate}[label=(\arabic*),leftmargin=2.3em,start=1]
\item\label{e1:first}%
\textbf{Family $\mathcal A_{1}$:}
$e_{1}\otimes\beta_{q i j}$ with $0\le i,j\le q$
\hfill(\# $(q+1)^{2}$)\\
\hspace*{1em}%
$(S_{q})^{\wedge1}_{A'}(e_{1}\!\otimes\!\beta_{q i j})
      =e_{0}\wedge e_{1}\otimes c_{q i j}
      \;+\;\delta_{i,2}\,e_{2}\wedge e_{1}\otimes c_{q0j}$.
\item\label{e1:second}%
\textbf{Family $\mathcal A_{2}$:}
$e_{1}\otimes\beta_{i q j}$ with $0\le i\le q-1,\;0\le j\le q$
\hfill(\# $q(q+1)$)\\
\hspace*{1em}%
$(S_{q})^{\wedge1}_{A'}(e_{1}\!\otimes\!\beta_{i q j})
      =e_{0}\wedge e_{1}\otimes c_{i q j}
       \;+\;\delta_{i,2}\,e_{2}\wedge e_{1}\otimes c_{0 q j}$.
\item\label{e1:third}%
\textbf{Family $\mathcal A_{3}$:}
$e_{1}\otimes\beta_{i j q}$ with $0\le i,j\le q-1$
\hfill(\# $q^{2}$)\\
\hspace*{1em}%
$(S_{q})^{\wedge1}_{A'}(e_{1}\!\otimes\!\beta_{i j q})
      =e_{0}\wedge e_{1}\otimes c_{i j q}
       \;+\;\delta_{i,2}\,e_{2}\wedge e_{1}\otimes c_{0 j q}$.
\end{enumerate}

\subsubsection*{Inputs with $X=e_{2}$ (total $\;3q^{2}+3q+1$)}
\begin{enumerate}[label=(\arabic*),leftmargin=2.3em,start=4]
\item\label{e2:first}%
\textbf{Family $\mathcal B_{1}$:}
$e_{2}\otimes\beta_{q i j}$ with $0\le i,j\le q$
\hfill(\# $(q+1)^{2}$)\\
\hspace*{1em}%
$(S_{q})^{\wedge1}_{A'}(e_{2}\!\otimes\!\beta_{q i j})
      =e_{0}\wedge e_{2}\otimes c_{q i j}
      \;+\;\delta_{i,1}\,e_{1}\wedge e_{2}\otimes c_{q0j}$.
\item\label{e2:second}%
\textbf{Family $\mathcal B_{2}$:}
$e_{2}\otimes\beta_{i q j}$ with $0\le i\le q-1,\;0\le j\le q$
\hfill(\# $q(q+1)$)\\
\hspace*{1em}%
$(S_{q})^{\wedge1}_{A'}(e_{2}\!\otimes\!\beta_{i q j})
      =e_{0}\wedge e_{2}\otimes c_{i q j}
       \;+\;\delta_{i,1}\,e_{1}\wedge e_{2}\otimes c_{0 q j}$.
\item\label{e2:third}%
\textbf{Family $\mathcal B_{3}$:}
$e_{2}\otimes\beta_{i j q}$ with $0\le i,j\le q-1$
\hfill(\# $q^{2}$)\\
\hspace*{1em}%
$(S_{q})^{\wedge1}_{A'}(e_{2}\!\otimes\!\beta_{i j q})
      =e_{0}\wedge e_{2}\otimes c_{i j q}
       \;+\;\delta_{i,1}\,e_{1}\wedge e_{2}\otimes c_{0 j q}$.
\end{enumerate}

\subsubsection*{Inputs with $X=e_{0}$ (total $\;12(q+1)$)}
Here exactly \emph{one} index of $\mathbf j$ is $q$ and
\emph{one} index is $1$ or $2$; the remaining free index runs over
$0,\dots,q$.  Set
\[
\mathcal C_{t}^{(\ell)}
  :=\{\mathbf j=(j_{1},j_{2},j_{3})\mid
       j_{\ell}=q,\;
       j_{m}=t,\;
       j_{n}=r,\;
       0\le r\le q\},
\quad
t\in\{1,2\},\; \ell\in\{1,2,3\},
\]
where \((\ell,m,n)\) is a cyclic permutation of $(1,2,3)$.  All twelve
sets \(\mathcal C_{t}^{(\ell)}\) are disjoint, each of size $(q+1)$,
so together they contribute \(12(q+1)\) inputs.

\begin{enumerate}[label=(\arabic*),leftmargin=2.3em,start=7]
\item\label{e0:general}%
\textbf{Family $\mathcal C_{t}^{(\ell)}$:}
$e_{0}\otimes\beta_{\mathbf j}$ with $\mathbf j\in\mathcal C_{t}^{(\ell)}$
\hfill(\# $(q+1)$ in each of the 12 families)\\
\hspace*{1em}%
Let $t\in\{1,2\}$ and suppose $j_{m}=t$. 
Write $\mathbf j^{\langle0\rangle}$ for the multi–index obtained from
$\mathbf j$ by replacing that $t$ by $0$ (all other coordinates unchanged).
Then
\[
\bigl(S_{q}\bigr)^{\wedge1}_{A'}
   \bigl(e_{0}\otimes\beta_{\mathbf j}\bigr)
   \;=\;
   e_{t}\wedge e_{0}\;\otimes\;c_{\mathbf j^{\langle0\rangle}}.
\]
\end{enumerate}

\medskip
\textbf{Conclusion}
Putting the three blocks together one obtains
\[
\#\operatorname{Im}\!\bigl((S_{q})^{\wedge1}_{A'}\bigr)
  \;=\;
  2\bigl(3q^{2}+3q+1\bigr)\;+\;12(q+1)
  \;=\;
  6q^{2}+18q+14,
\]
and the displayed images are visibly linearly independent because
\(\Lambda^{2}A'\) carries the disjoint basis
\(\{e_{0}\wedge e_{1},\,e_{0}\wedge e_{2},\,e_{1}\wedge e_{2}\}\)
and every term listed above features a \emph{unique} wedge factor and
a \emph{unique} $C$–tensor an in the case of $m=2$.
Consequently  
\(\mathrm{rank}\bigl((S_{q})^{\wedge1}_{A'}\bigr)=6q^{2}+18q+14\),
and complete the proof by \eqref{eq:Sq-rank-3}.
\end{proof}
Although experts may already know the answer, the author does not know how to express the output matrix of the CW matrix-multiplication algorithm solely in terms of the entries of the input matrices.

\section*{Acknowledgements}
This note originated from questions that arose during the author's work on a ciphertext-matrix-multiplication project at EAGLYS Inc. The author is grateful to his colleagues there for providing a stimulating environment and for many helpful discussions.


\begin{thebibliography}{CGLV22}

\bibitem[CW90]{CW90}
D.~Coppersmith and S.~Winograd.
\newblock \emph{Matrix multiplication via arithmetic progressions}.
\newblock Journal of Symbolic Computation \textbf{9} (1990), no.~3, 251–280.

\bibitem[CGLV22]{CGLV22}
S.~Conner, E.~Gesmundo, J.~M.~Landsberg, and F.~Venturi.
\newblock \emph{Rank and border rank of Kronecker powers of tensors and Strassen's laser method}.
\newblock computational complexity, 31 (1), 2022

\bibitem[LO15]{LO15}
J.~M.~Landsberg and G.~Ottaviani.
\newblock \emph{New lower bounds for the border rank of matrix multiplication}.
\newblock Theory of Computing \textbf{11} (2015), 285–298
      https://arxiv.org/abs/1312.6303

\end{thebibliography}
\end{document}